\begin{document} 

\title{Convolution identities of poly-Cauchy numbers with level $2$
}  
\author{
Takao Komatsu\\
\small Department of Mathematical Sciences, School of Science\\[-0.8ex]
\small Zhejiang Sci-Tech University\\[-0.8ex]
\small Hangzhou 310018 China\\[-0.8ex]
\small \texttt{komatsu@zstu.edu.cn}
}

\date{
\small MR Subject Classifications: Primary 11B75; Secondary 11B37, 11B50, 05A15, 05A19, 11A55, 11C20 
}

\maketitle

\def\fl#1{\left\lfloor#1\right\rfloor}
\def\cl#1{\left\lceil#1\right\rceil}
\def\ang#1{\left\langle#1\right\rangle}
\def\stf#1#2{\left[#1\atop#2\right]} 
\def\sts#1#2{\left\{#1\atop#2\right\}}
\def\stff#1#2{\left[\!\!\left[#1\atop#2\right]\!\!\right]} 
\def\angg#1#2{\left\langle#1\atop#2\right\rangle}

\newtheorem{theorem}{Theorem}
\newtheorem{Prop}{Proposition}
\newtheorem{Cor}{Corollary}
\newtheorem{Lem}{Lemma}

\begin{abstract} 
Poly-Cauchy numbers with level $2$ are defined by inverse sine hyperbolic functions with the inverse relation from sine hyperbolic functions.  
In this paper, we show several convolution identities of poly-Cauchy numbers with level $2$. In particular, that of three poly-Cauchy numbers with level $2$ can be expressed as a simple form. In the sequel, we introduce the Stirling numbers of the first kind with level $2$.  
\\
{\bf Keywords:} Poly-Cauchy numbers, hyperbolic functions, inverse hyperbolic functions, convolutions, Stirling numbers of the first kind      
\end{abstract}


\section{Introduction}

Poly-Cauchy numbers (of the first kind) $c_n^{(k)}$ are defined as 
\begin{equation}
{\rm Lif}_k\bigl(\log(1+t)\bigr)=\sum_{n=0}^\infty c_n^{(k)}\frac{t^n}{n!}\,, 
\label{def:pcau} 
\end{equation}    
where ${\rm Lif}_k(z)$ is the {\it polylogarithm factorial} or {\it polyfactorial} function, defined by  
$$
{\rm Lif}_k(z)=\sum_{m=0}^\infty\frac{z^{m}}{m!(m+1)^k} 
$$ 
(\cite{Ko1,Ko2}).  
The concept of poly-Cauchy numbers with the polylogarithm factorial function is an anagloues of poly-Bernoulli numbers with the polylogarithm function (\cite{Kaneko}).  

There are many papers on poly-xxx numbers and most of them are just generalizations for generalization's sake, but this paper does not add another example.  For, most generalizations or variations of so-called poly numbers or polynomials are just with level $1$, but we consider poly numbers with level $2$.     

{\it Poly-Cauchy numbers}  $\mathfrak C_n^{(k)}$ {\it with level $2$} \cite{KP} are defined by  
\begin{equation}  
{\rm Lif}_{2,k}({\rm arcsinh}t)=\sum_{n=0}^\infty\mathfrak C_n^{(k)}\frac{t^n}{n!}\,,  
\label{def:pcaulevel2}  
\end{equation}  
where  ${\rm arcsinh}t$ is the inverse hyperbolic sine function and  
$$
{\rm Lif}_{2,k}(z)=\sum_{m=0}^\infty\frac{z^{2 m}}{(2 m)!(2 m+1)^k}\,. 
$$ 
which may be called the {\it polylogarithm factorial function with level $2$}.   
Notice that poly-Cauchy numbers with level $2$ are not simple generalizations of poly-Cauchy numbers or the original Cauchy numbers $c_n:=c_n^{(1)}$ defined by 
$$
\frac{t}{\log(1+t)}=\sum_{n=0}^\infty c_n\frac{t^n}{n!} 
$$  
because poly-Cauchy numbers with level $2$ are based on hyperbolic functions but so-called poly-Cauchy (or the original Cauchy) numbers with level $1$ are based upon logarithm functions.  In this sense, most generalizations of poly-Cauchy numbers are still based upon the same logarithm functions, but our poly-Cauchy numbers are constructed by a different function.    
In a similar sense, there are many generalizations of poly-Bernoulli numbers as with level $1$, but poly-cosecant numbers \cite{KPT} are those with level $2$.

The original Cauchy numbers, poly-Cauchy numbers and most of their generalizations are realted with the Stirling numbers of the first kind.  On the contrary, poly-Cauchy numbers with level $2$ are related with poly-Cauchy numbers with level $2$, which are not simple generalizations but essentially different from the original Stirling numbers of the first kind.  

In fact, $\mathfrak C_{n}^{(k)}$ has an expression in terms of $(2 m+1)^k$ ($m=1,2,\dots,n$) by using the Stirling numbers of the first kind with level $2$.  Note that $\mathfrak C_n=0$ for odd $n$.  

\begin{theorem}  
For integers $n$ and $k$ with $n\ge 1$,  
$$
\mathfrak C_{2 n}^{(k)}=\sum_{m=1}^n\frac{(-4)^{n-m}}{(2 m+1)^k}\stff{n}{m}\,,
$$ 
where for $m=1,2,\dots,n$
\begin{multline}
\stff{n}{m}=\stf{n}{m}^2-2\stf{n}{m-1}\stf{n}{m+1}+2\stf{n}{m-2}\stf{n}{m+2}\\
-\cdots+2(-1)^{m-1}\stf{n}{1}\stf{n}{2 m-1} 
\label{a:express}
\end{multline} 
with $\stff{n}{0}=0$. 
\label{th:form3} 
\end{theorem}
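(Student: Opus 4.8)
The plan is to introduce an auxiliary variable $\lambda$ and work with the bivariate generating function $\cosh(\lambda\,{\rm arcsinh}\,t)$. Since $\cosh z=\sum_{m\ge0}z^{2m}/(2m)!$, taking $z=\lambda\,{\rm arcsinh}\,t$ gives $\cosh(\lambda\,{\rm arcsinh}\,t)=\sum_{m\ge0}\lambda^{2m}({\rm arcsinh}\,t)^{2m}/(2m)!$, so the coefficient of $\lambda^{2m}$ is exactly the series $({\rm arcsinh}\,t)^{2m}/(2m)!$ that controls $\mathfrak C_{2n}^{(k)}$ through \eqref{def:pcaulevel2}. The first step is to find a closed form for this function in powers of $t$. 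Setting $y=\cosh(\lambda\,{\rm arcsinh}\,t)$ and using $({\rm arcsinh}\,t)'=(1+t^2)^{-1/2}$, I would verify that $y$ satisfies the linear ODE $(1+t^2)y''+t y'-\lambda^2 y=0$ with $y(0)=1$ and $y'(0)=0$. Substituting $y=\sum_{n\ge0}c_n\,t^{2n}/(2n)!$ and collecting the coefficient of $t^{2n}/(2n)!$ collapses, via $(2n)(2n-1)+2n=(2n)^2$, to the two-term recurrence $c_{n+1}=(\lambda^2-(2n)^2)c_n$. With $c_0=1$ this yields
$$
\cosh(\lambda\,{\rm arcsinh}\,t)=\sum_{n=0}^\infty\Bigl(\prod_{r=0}^{n-1}(\lambda^2-(2r)^2)\Bigr)\frac{t^{2n}}{(2n)!}.
$$

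Next I would match coefficients in two stages. Reading off the coefficient of $\lambda^{2m}$ identifies $({\rm arcsinh}\,t)^{2m}/(2m)!$ with $\sum_{n\ge m}\bigl([\lambda^{2m}]\prod_{r=0}^{n-1}(\lambda^2-(2r)^2)\bigr)\,t^{2n}/(2n)!$. Writing $u=\lambda^2$, the polynomial is $\prod_{r=0}^{n-1}(u-4r^2)$; replacing $u$ by $-4v$ turns it into $(-4)^n\prod_{r=0}^{n-1}(v+r^2)$, and comparing $v^m$-coefficients gives $[\lambda^{2m}]\prod_{r=0}^{n-1}(\lambda^2-(2r)^2)=(-4)^{n-m}\,[\zeta^{2m}]\prod_{r=0}^{n-1}(\zeta^2+r^2)$. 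This is where the factor $(-4)^{n-m}$ of the statement is produced, and it remains to identify $[\zeta^{2m}]\prod_{r=0}^{n-1}(\zeta^2+r^2)$ with the convolution \eqref{a:express}, which I take as the definition of $\stff{n}{m}$.

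For that identification I would use the factorization $\zeta^2+r^2=(i\zeta+r)(-i\zeta+r)$, so that
$$
\prod_{r=0}^{n-1}(\zeta^2+r^2)=\Bigl(\prod_{r=0}^{n-1}(i\zeta+r)\Bigr)\Bigl(\prod_{r=0}^{n-1}(-i\zeta+r)\Bigr).
$$
Each factor is a rising factorial, and the generating function of the unsigned Stirling numbers of the first kind gives $\prod_{r=0}^{n-1}(x+r)=\sum_a\stf{n}{a}x^a$. Expanding with $x=i\zeta$ and $x=-i\zeta$ and multiplying, the coefficient of $\zeta^{2m}$ is $\sum_{a+b=2m}\stf{n}{a}\stf{n}{b}\,i^{a-b}$; setting $a=m+j$ and $b=m-j$ turns $i^{a-b}$ into $(-1)^j$, and pairing $\pm j$ (together with $\stf{n}{0}=0$ for $n\ge1$, which truncates the range at $j=m-1$) reproduces exactly $\stf{n}{m}^2-2\stf{n}{m-1}\stf{n}{m+1}+\cdots+2(-1)^{m-1}\stf{n}{1}\stf{n}{2m-1}$, i.e.\ \eqref{a:express}.

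Finally I would assemble the pieces: inserting the expansion of $({\rm arcsinh}\,t)^{2m}/(2m)!$ into ${\rm Lif}_{2,k}({\rm arcsinh}\,t)=\sum_{m\ge0}({\rm arcsinh}\,t)^{2m}/\bigl((2m)!(2m+1)^k\bigr)$ and comparing the coefficient of $t^{2n}/(2n)!$ with \eqref{def:pcaulevel2} gives $\mathfrak C_{2n}^{(k)}=\sum_{m=0}^n(-4)^{n-m}\stff{n}{m}/(2m+1)^k$; since $\stff{n}{0}=\prod_{r=0}^{n-1}r^2=0$ for $n\ge1$, the $m=0$ term drops and the claimed formula follows. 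I expect the main obstacle to be the first step, namely recognizing that the right object to analyze is $\cosh(\lambda\,{\rm arcsinh}\,t)$ and deriving its closed product form; once that is in hand, the appearance of the convolution \eqref{a:express} is forced by the elementary factorization $\zeta^2+r^2=(i\zeta+r)(-i\zeta+r)$, and the only delicate bookkeeping is tracking the normalizing power $(-4)^{n-m}$ through the two coefficient extractions.
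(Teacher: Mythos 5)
Your proposal is correct, but it follows a genuinely different route from the paper's. The paper's proof has two ingredients: (i) it verifies by direct algebra that the convolution \eqref{a:express} satisfies the recurrence \eqref{a:recrel}, $\stff{n}{m}=\stff{n-1}{m-1}+(n-1)^2\stff{n-1}{m}$, thereby identifying it with the coefficients of the rising factorial \eqref{stff:risfac}; and (ii) it \emph{cites} the expansion $({\rm arcsinh}\,t)^{2m}/(2m)!=\sum_{n\ge m}(-4)^{n-m}\stff{n}{m}\,t^{2n}/(2n)!$ from the central factorial literature (Butzer--Schmidt--Stark--Vogt, eq.\ (4.1.4), whose proof there rests on sin/arcsin inversion and orthogonality of central factorial numbers), after which the substitution into \eqref{def:pcaulevel2} is the same as yours. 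You replace both ingredients by self-contained arguments: the ODE $(1+t^2)y''+ty'-\lambda^2y=0$ for $y=\cosh(\lambda\,{\rm arcsinh}\,t)$, whose two-term recurrence $c_{n+1}=(\lambda^2-(2n)^2)c_n$ yields the closed product form and hence the arcsinh expansion without any citation; and the factorization $\zeta^2+r^2=(i\zeta+r)(-i\zeta+r)$, which identifies $[\zeta^{2m}]\prod_{r=0}^{n-1}(\zeta^2+r^2)$ with \eqref{a:express} in one stroke via the Stirling generating function $\prod_{r=0}^{n-1}(x+r)=\sum_a\stf{n}{a}x^a$ (your bookkeeping here --- the pairing $j\leftrightarrow -j$, the truncation from $\stf{n}{0}=0$, and the vanishing of the $m=0$ term since $\stff{n}{0}=0$ --- all checks out). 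What each approach buys: yours is fully self-contained and arguably cleaner, since the complex factorization is a direct, non-inductive identification; the paper's recurrence verification, on the other hand, establishes \eqref{a:recrel} for the convolution expression as a by-product, which is exactly what justifies the paper's naming of $\stff{n}{m}$ as Stirling numbers of the first kind with level $2$ and ties them to the known central factorial numbers $t(2n,2m)$.
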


\noindent 
{\it Remark.}  
Poly-Cauchy numbers can be expressed by using the Stirling numbers of the first kind: 
$$
c_n^{(k)}=\sum_{m=0}^n\frac{(-1)^{n-m}}{(m+1)^k}\stf{n}{m}
$$ 
(\cite{Ko1}), where $\stf{n}{m}$ are the (unsigned) Stirling numbers of the first kind arise as coefficient of the rising factorial 
$$  
x(x+1)(x+2)\cdots(x+n-1)=\sum_{m=0}^n\stf{n}{m}x^m\,. 
$$ 

The Stirling numbers of the first kind $\stff{n}{m}$ with level $2$ arise as coefficient of the rising factorial 
\begin{equation}  
x(x+1^2)(x+2^2)\cdots(x+(n-1)^2)=\sum_{m=0}^n\stff{n}{m}x^m
\label{stff:risfac} 
\end{equation}
(see, e.g., \cite[p.213--217]{Riordan},\cite{BSSV}\footnote{There is a relation $\stff{n}{m}=(-1)^{n-m}t(2 n,2 m)$, where $t(n,m)$ are the central factorial numbers of the first kind, defined by $x(x+\frac{n}{2}-1)(x+\frac{n}{2}-2)\cdots(x-\frac{n}{2}+1)=\sum_{m=0}^n t(n,m)x^m$.}), 
So, they can also be written as 
$$
\stff{n}{m}=\sum_{1\le i_1<\cdots<i_{n-m}\le n-1}(i_1\cdots i_{n-m})^2\,.
$$ 
Thus, the following relation holds:  
\begin{equation}  
\stff{n}{m}=\stff{n-1}{m-1}+(n-1)^2\stff{n-1}{m}
\label{a:recrel}  
\end{equation} 
({\it Cf.}\cite{GZ}\footnote{In \cite{GZ} $u(n,m)$ are used as $u(n,m)=(-1)^{n-m}\stff{n}{m}$.}).  
In this sense, the numbers $\stff{n}{m}$ are suitable to be called the {\it Stirling numbers of the first kind with level $2$}, because the (unsigned) Stirling numbers of the first kind satisfy the recurrence relation
$$
\stf{n}{m}=\stf{n-1}{m-1}+(n-1)\stf{n-1}{m}\,. 
$$

Notice that concerning the Stirling numbers of the first kind we see 
\begin{align*}  
&\stf{n}{0}=0\quad(n\ge 1),\quad \stf{n}{1}=(n-1)!,\quad \stf{n}{2}=(n-1)!H_{n-1},\\ 
&\stf{n}{n}=1,\quad \stf{n}{n-1}=\binom{n}{2},\quad \stf{n}{n-2}=\frac{3 n-1}{4}\binom{n}{3},\quad \stf{n}{n-3}=\binom{n}{2}\binom{n}{4},\\
&\stf{n}{n-4}=\frac{15 n^3-30 n^2+5 n-2}{48}\binom{n}{5},\quad \stf{n}{n-5}=\frac{3 n^2-7 n-2}{8}\binom{n}{2}\binom{n}{6},\\ 
&\stf{n}{n-6}=\frac{63 n^5-315 n^4+315 n^3+91 n^2-42 n-16}{576}\binom{n}{7}\,. 
\end{align*} 
In particular, for $m=1,2,3$ 
\begin{align*}  
\stff{n}{1}&=\bigl((n-1)!\bigr)^2\,,\\
\stff{n}{2}&=\bigl((n-1)!\bigr)^2 H_{n-1}^{(2)}\quad{\rm \cite[A001819]{oeis}}\,,\\
\stff{n}{3}&=\bigl((n-1)!\bigr)^2\frac{(H_{n-1}^{(2)})^2-H_{n-1}^{(4)}}{2}\quad{\rm \cite[A001820]{oeis}}\,,\\
\end{align*}
where 
$$
H_n^{(k)}=\sum_{j=1}^n\frac{1}{j^k}
$$ 
is the generalized harmonic number of order $k$.  The numbers $\stff{n}{4}$ can be found in \cite[A001821]{oeis}. 
Since $\stf{n}{k}=0$ for $k>n>0$,  
\begin{align*}
\stff{n}{n}&=\stf{n}{n}^2=1\,,\\
\stff{n}{n-1}&=\stf{n}{n-1}^2-2\stf{n}{n-2}\stf{n}{n}=\frac{1}{2^2}\binom{2 n}{3}\,,\\
\stff{n}{n-2}&=\stf{n}{n-2}^2-2\stf{n}{n-3}\stf{n}{n-1}+2\stf{n}{n-4}\stf{n}{n}\\
&=\frac{5 n+1}{3\cdot 2^3}\binom{2 n}{5}\,,\\
\stff{n}{n-3}&=\stf{n}{n-3}^2-2\stf{n}{n-4}\stf{n}{n-2}+2\stf{n}{n-5}\stf{n}{n-1}-2\stf{n}{n-6}\stf{n}{n}\\
&=\frac{35 n^2+21 n+4}{9\cdot 2^4}\binom{2 n}{7}\,,\\
\stff{n}{n-4}&=\frac{(5 n+2)(35 n^2+28 n+9)}{15\cdot 2^5}\binom{2 n}{9}\,,\\
\stff{n}{n-5}&=\frac{385 n^4+770 n^3+671 n^2+286 n+48}{9\cdot 2^6}\binom{2 n}{11}\,. 
\end{align*} 

\begin{proof}[Proof of Theorem \ref{th:form3}.]  
First, notice that the expression of $\stff{n}{m}$ in terms of $\stf{n}{k}$ can satisfy the same recurrence relation in (\ref{a:recrel}).  
From (\ref{a:express}), since  
\begin{align*} 
&\stf{n-1}{m}\stf{n-1}{m-1}-\stf{n-1}{m-1}\stf{n-1}{m}+\stf{n-1}{m-2}\stf{n-1}{m+1}-\cdots\\
&\qquad -\stf{n-1}{m+1}\stf{n-1}{m-2}+\stf{n-1}{m+2}\stf{n-1}{m-3}-\cdots=0\,,
\end{align*}
we can also see that 
\begin{align*}  
\stff{n}{m}&=\left((n-1)\stf{n-1}{m}+\stf{n-1}{m-1}\right)^2\\
&\quad -2\left((n-1)\stf{n-1}{m-1}+\stf{n-1}{m-2}\right)\left((n-1)\stf{n-1}{m+1}+\stf{n-1}{m}\right)\\
&\quad +2\left((n-1)\stf{n-1}{m-2}+\stf{n-1}{m-3}\right)\left((n-1)\stf{n-1}{m+2}+\stf{n-1}{m+1}\right)-\cdots\\ 
&=\stff{n-1}{m-1}+(n-1)^2\stff{n-1}{m}\\
&\quad +2(n-1)\left(\stf{n-1}{m}\stf{n-1}{m-1}-\stf{n-1}{m-1}\stf{n-1}{m}+\stf{n-1}{m-2}\stf{n-1}{m+1}-\cdots\right.\\
&\qquad\left.-\stf{n-1}{m+1}\stf{n-1}{m-2}+\stf{n-1}{m+2}\stf{n-1}{m-3}-\cdots\right)\\
&=\stff{n-1}{m-1}+(n-1)^2\stff{n-1}{m}\,. 
\end{align*} 
  
Now, because 
$$
\frac{({\rm arcsinh} t)^{2 m}}{(2 m)!}=\sum_{n=m}^\infty(-4)^{n-m}\stff{n}{m}\frac{t^{2 n}}{(2 n)!}
$$ 
(see, e.g.\cite[(4.1.4)]{BSSV}\footnote{This proof is based upon the inverse relation between $\sin$ and arcsin with the orthogonal property of the central factorial numbers of both kinds.}), 
we have 
\begin{align*}  
\sum_{n=0}^\infty\mathfrak C_{2 n}\frac{t^{2 n}}{(2 n)!}&=\sum_{m=0}^\infty\frac{({\rm arcsinh} t)^{2 m}}{(2 m)!(2 m+1)^k}\\
&=\sum_{m=0}^\infty\frac{1}{(2 m+1)^k}\sum_{n=m}^\infty(-4)^{n-m}\stff{n}{m}\frac{t^{2 n}}{(2 n)!}\\
&=\sum_{n=0}^\infty\sum_{m=0}^n\frac{(-4)^{n-m}}{(2 m+1)^k}\stff{n}{m}\frac{t^{2 n}}{(2 n)!}\,.
\end{align*} 
Comparing the coefficients on both sides, we get the result.  
\end{proof} 
\bigskip

Poly-Cauchy numbers have an expression of integrals 
$$
c_n^{(k)}=n!\underbrace{\int_0^1\cdots\int_0^1}_k\binom{x_1 x_2\cdots x_k}{n}d x_1 d x_2\dots d x_k 
$$ 
(\cite{Ko1}).  
Poly-Cauchy numbers with level $2$ also have a similar expression (or a kind of definition). 

\begin{Cor}  
For $n\ge 0$ and $k\ge 1$, we have 
$$
\mathfrak C_{2 n}^{(k)}=(-4)^n(n!)^2\underbrace{\int_0^1\cdots\int_0^1}_k\binom{\dfrac{x_1 x_2\cdots x_k}{2}}{n}\binom{-\dfrac{x_1 x_2\cdots x_k}{2}}{n}d x_1 d x_2\dots d x_k\,. 
$$ 
\label{cor:form3}  
\end{Cor}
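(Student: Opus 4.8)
The plan is to expand the integrand into a polynomial in the single variable $y:=x_1 x_2\cdots x_k$, identify that polynomial with the level-$2$ rising factorial that defines $\stff{n}{m}$ in (\ref{stff:risfac}), and then integrate monomial by monomial over the cube. First I would expand the product of binomial coefficients pointwise in $y$. Writing out the falling-factorial numerators gives
\[
\binom{y/2}{n}\binom{-y/2}{n}=\frac{1}{(n!)^2}\prod_{j=0}^{n-1}\left(\frac{y}{2}-j\right)\left(-\frac{y}{2}-j\right),
\]
and the key algebraic observation is that each paired factor collapses to a difference of squares,
\[
\left(\frac{y}{2}-j\right)\left(-\frac{y}{2}-j\right)=j^2-\frac{y^2}{4}.
\]
Thus the integrand is, up to the constant $1/(n!)^2$, the product $\prod_{j=0}^{n-1}\bigl(j^2-\frac{y^2}{4}\bigr)$.

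The second step is to recognize this as a rising factorial. Setting $x=-y^2/4$ and peeling off the $j=0$ factor yields
\[
\prod_{j=0}^{n-1}\left(j^2-\frac{y^2}{4}\right)=x\prod_{j=1}^{n-1}(x+j^2)=x(x+1^2)(x+2^2)\cdots\bigl(x+(n-1)^2\bigr),
\]
which by the defining relation (\ref{stff:risfac}) equals $\sum_{m=0}^n\stff{n}{m}x^m=\sum_{m=0}^n\stff{n}{m}(-1/4)^m y^{2m}$. Multiplying through by $(-4)^n(n!)^2$ and combining the powers via $(-4)^n(-1/4)^m=(-4)^{n-m}$ converts the integrand into a clean polynomial,
\[
(-4)^n(n!)^2\binom{y/2}{n}\binom{-y/2}{n}=\sum_{m=0}^n(-4)^{n-m}\stff{n}{m}\,y^{2m}.
\]

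The third step is to integrate. Substituting $y=x_1\cdots x_k$ and using $\int_0^1 x^{2m}\,dx=1/(2m+1)$ separately in each of the $k$ variables gives $\int_0^1\cdots\int_0^1 y^{2m}\,dx_1\cdots dx_k=1/(2m+1)^k$, so the full $k$-fold integral equals $\sum_{m=0}^n\frac{(-4)^{n-m}}{(2m+1)^k}\stff{n}{m}$. Since $\stff{n}{0}=0$, the $m=0$ term drops, and what remains is exactly the formula for $\mathfrak C_{2n}^{(k)}$ supplied by Theorem~\ref{th:form3}. I do not anticipate a genuine obstacle here, as the argument is a direct verification; the only delicate point is the sign-and-power bookkeeping in the identity $(-4)^n(-1/4)^m=(-4)^{n-m}$ together with keeping track of the $j=0$ factor, which is what makes the difference-of-squares collapse line up precisely with the squared rising factorial.
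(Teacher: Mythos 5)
Your proposal is correct and follows essentially the same route as the paper's proof: both identify $(-4)^n(n!)^2\binom{y/2}{n}\binom{-y/2}{n}$ with $\sum_{m=0}^n(-4)^{n-m}\stff{n}{m}y^{2m}$ via the rising factorial (\ref{stff:risfac}), integrate termwise using $\int_0^1 x^{2m}dx=1/(2m+1)$, and conclude by Theorem \ref{th:form3}. The only difference is that you spell out the difference-of-squares collapse $(y/2-j)(-y/2-j)=j^2-y^2/4$ and the sign bookkeeping explicitly, which the paper leaves implicit in its citation of (\ref{stff:risfac}).
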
  
\begin{proof}  
By Theorem \ref{th:form3} and the expression in (\ref{stff:risfac}), 
\begin{align*}  
&(-4)^n(n!)^2\underbrace{\int_0^1\cdots\int_0^1}_k\binom{\dfrac{x_1 x_2\cdots x_k}{2}}{n}\binom{-\dfrac{x_1 x_2\cdots x_k}{2}}{n}d x_1 d x_2\dots d x_k\\
&=\underbrace{\int_0^1\cdots\int_0^1}_k\sum_{m=0}^n(-4)^{n-m}\stff{n}{m}(x_1 x_2\cdots x_k)^{2 m}d x_1 d x_2\dots d x_k\\
&=\sum_{m=1}^n\frac{(-4)^{n-m}}{(2 m+1)^k}\stff{n}{m}=\mathfrak C_{2 n}^{(k)}\,. 
\end{align*}
\end{proof}



\section{Convolution}

When $k=1$, several initial values of $\mathfrak C_n=\mathfrak C_n^{(1)}$ are as follows.  
$$
\{\mathfrak C_{2 n}\}_{n\ge 0}=1,\frac{1}{3}, -\frac{17}{15}, \frac{367}{21}, -\frac{27859}{45}, \frac{1295803}{33}, -\frac{5329242827}{1365}, \dots\,. 
$$ 

In \cite{Zhao}, the convolution identity for Cauchy numbers are given as 
$$
\sum_{k=0}^n\binom{n}{k}c_k c_{n-k}=-n(n-2)c_{n-1}-(n-1)c_n\quad(n\ge 0)\,. 
$$ 
A more general case $\sum_{k=0}^n\binom{n}{k}c_{k+l}c_{n-k+m}$ for some fixed nonnegative integers $l$ and $m$ is treated in \cite{Ko5}.  In \cite{Ko6}, the convolution identities for Cauchy numbers of the second kind $\hat c_n$, defined by 
$$
\frac{t}{(1+t)\log(1+t)}=\sum_{n=0}^\infty\hat c_n\frac{t^n}{n!}
$$ 
have been studied.  In this section, we give the convolution identity for Cauchy numbers with level $2$ is given.   For simplicity, we use the conventional convolution notation 
$$ 
(\mathfrak C_{2 j_1}+\cdots+\mathfrak C_{2 j_k})^n:=\sum_{i_1+\cdots+i_k=n\atop i_1,\dots,i_k\ge 0}\binom{2 n}{2 i_1,\dots,2 i_k}\mathfrak C_{2 i_1+2 j_1}\cdots\mathfrak C_{2 i_k+2 j_k}\,,
$$ 
where 
$$
\binom{2 n}{2 i_1,\dots,2 i_k}=\frac{(2 n)!}{(2 i_1)!\cdots(2 i_k)!}
$$ 
is the multinomial coefficient.

\begin{theorem}  
For $n\ge 0$ 
$$
(\mathfrak C_{0}+\mathfrak C_{0})^n=(2 n)!\sum_{l=0}^n\frac{(-1)^{n-l}(2 n-2 l-3)!!(2 l-1)}{2^{n-l}(n-l)!(2 l)!}\mathfrak C_{2 l}\,. 
$$ 
Here $(2 i-1)!!=(2 i-1)(2 i-3)\cdots 1$ ($i\ge 1$) with $\bigl(-(2 i+1)\bigr)!!=\frac{(-1)^i}{(2 i-1)!!}$ ($i\ge 1$) and $(-1)!!=1$. 
\label{convo}  
\end{theorem}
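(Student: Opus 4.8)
The plan is to pass to exponential generating functions and reduce the stated identity to a single functional identity in one variable. First I would record that ${\rm Lif}_{2,1}(z)=\sum_{m\ge0}z^{2m}/(2m+1)!=\sinh(z)/z$, so that the even-index generating function is
$$
f(t):=\sum_{n=0}^\infty\mathfrak C_{2n}\frac{t^{2n}}{(2n)!}={\rm Lif}_{2,1}({\rm arcsinh}\,t)=\frac{\sinh({\rm arcsinh}\,t)}{{\rm arcsinh}\,t}=\frac{t}{{\rm arcsinh}\,t}\,.
$$
Since $\binom{2n}{2i,2n-2i}=(2n)!/\bigl((2i)!(2n-2i)!\bigr)$, the convolution on the left is precisely $(\mathfrak C_0+\mathfrak C_0)^n=(2n)!\,[t^{2n}]f(t)^2$; equivalently $\sum_{n\ge0}(\mathfrak C_0+\mathfrak C_0)^n t^{2n}/(2n)!=f(t)^2$. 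So the left-hand side is encoded by $f(t)^2$.

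Next I would compute the generating function of the right-hand side as a Cauchy product of two series in $t^2$. The weighted sum carrying the factor $2l-1$ is handled by noting that $\sum_{l\ge0}2l\,\mathfrak C_{2l}t^{2l}/(2l)!=tf'(t)$ and $\sum_{l\ge0}\mathfrak C_{2l}t^{2l}/(2l)!=f(t)$, so
$$
\sum_{l\ge0}\frac{(2l-1)\mathfrak C_{2l}}{(2l)!}\,t^{2l}=tf'(t)-f(t)\,.
$$
For the double-factorial weights, setting $a_k=(-1)^k(2k-3)!!/(2^k k!)$ and using the stated conventions (in particular $(-3)!!=-1$, hence $a_0=-1$), a direct check gives $a_k=-\binom{1/2}{k}$ for every $k\ge0$, so that $\sum_{k\ge0}a_k t^{2k}=-\sqrt{1+t^2}$. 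Reindexing by $k=n-l$ then shows that the generating function of the right-hand side, $\sum_{n\ge0}\bigl((2n)!\sum_{l=0}^n a_{n-l}\frac{2l-1}{(2l)!}\mathfrak C_{2l}\bigr)t^{2n}/(2n)!$, equals $-\sqrt{1+t^2}\,\bigl(tf'(t)-f(t)\bigr)$.

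It then remains to prove the single functional identity $f(t)^2=-\sqrt{1+t^2}\,\bigl(tf'(t)-f(t)\bigr)$. Writing $u={\rm arcsinh}\,t$ so that $u'=1/\sqrt{1+t^2}$ and $f=t/u$, one computes $tf'(t)-f(t)=t^2\frac{d}{dt}(1/u)=-t^2/\bigl(u^2\sqrt{1+t^2}\bigr)$, and multiplying by $-\sqrt{1+t^2}$ returns $t^2/u^2=f(t)^2$. Comparing coefficients of $t^{2n}$ in the two generating-function expressions yields the theorem.

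The routine but error-prone part, the one I would treat most carefully, is the coefficient bookkeeping: verifying $a_k=-\binom{1/2}{k}$ including the boundary value at $k=0$ governed by the nonstandard convention $(-3)!!=-1$, and correctly matching the Cauchy-product reindexing $k=n-l$ so that the $(2n)!$ prefactors cancel when comparing coefficients. The analytic heart, by contrast, is the compact identity $f^2=-\sqrt{1+t^2}\,(tf'-f)$, which is immediate once $f=t/{\rm arcsinh}\,t$ is in hand.
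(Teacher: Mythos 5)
Your proposal is correct and takes essentially the same approach as the paper's proof: both rest on the functional identity $L(t)^2=-t\sqrt{1+t^2}\,L'(t)+\sqrt{1+t^2}\,L(t)$ for $L(t)=t/{\rm arcsinh}\,t$ (your $f$), the double-factorial binomial expansion of $\sqrt{1+t^2}$, and comparison of coefficients of $t^{2n}$. You merely run the argument in the reverse direction, recognizing the theorem's right-hand side as the Cauchy product $-\sqrt{1+t^2}\,\bigl(tf'(t)-f(t)\bigr)$ and then verifying the functional identity by direct differentiation, which is a presentational difference only.
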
 
\begin{proof}  
For simplicity, put 
$$
L(t):=\frac{t}{{\rm arcsinh}t}=\sum_{n=0}^\infty\mathfrak C_{2 n}\frac{t^{2 n}}{(2 n)!}\,. 
$$ 
Since 
\begin{align*}  
L'(t)&=\frac{1}{{\rm arcsinh}t}-\frac{t}{\sqrt{1+t^2}({\rm arcsinh}t)^2}\\
&=\frac{1}{t}L(t)-\frac{1}{t\sqrt{1+t^2}}L(t)^2\,, 
\end{align*} 
we have 
\begin{equation} 
L(t)^2=-t\sqrt{1+t^2}L(t)'+\sqrt{1+t^2}L(t)\,. 
\label{eq:ll} 
\end{equation} 
Because 
\begin{align*}  
\sqrt{1+t^2}&=\sum_{j=0}^\infty\binom{\frac{1}{2}}{j}(t^2)^j\\
&=\sum_{j=0}^\infty\frac{(-1)^{j-1}(2 j-3)!!}{2^j\cdot j!}t^{2 j}
\end{align*}
and 
$$
tL(t)'=\sum_{l=0}^\infty(2 n)\mathfrak C_{2 n}\frac{t^{2 n}}{(2 n)!}\,,  
$$ 
we have 
\begin{align*}  
t\sqrt{1+t^2}L(t)'&=\left(\sum_{j=0}^\infty\frac{(-1)^{j-1}(2 j-3)!!}{2^j\cdot j!}t^{2 j}\right)\left(\sum_{l=0}^\infty(2 n)\mathfrak C_{2 n}\frac{t^{2 n}}{(2 n)!}\right)\\ 
&=\sum_{n=0}^\infty\left(\sum_{l=0}^n\frac{(-1)^{n-l-1}(2 n-2 l-3)!!(2 l)}{2^{n-l}(n-l)!}\frac{\mathfrak C_{2 l}}{(2 l)!}\right)t^{2 n} 
\end{align*}
and 
\begin{align*}  
t\sqrt{1+t^2}L(t)&=\left(\sum_{j=0}^\infty\frac{(-1)^{j-1}(2 j-3)!!}{2^j\cdot j!}t^{2 j}\right)\left(\sum_{l=0}^\infty\mathfrak C_{2 n}\frac{t^{2 n}}{(2 n)!}\right)\\ 
&=\sum_{n=0}^\infty\left(\sum_{l=0}^n\frac{(-1)^{n-l-1}(2 n-2 l-3)!!}{2^{n-l}(n-l)!}\frac{\mathfrak C_{2 l}}{(2 l)!}\right)t^{2 n} 
\end{align*}
Comparing the coefficients with 
$$
L(t)^2=\sum_{n=0}^\infty(\mathfrak C_{0}+\mathfrak C_{0})^n\frac{t^{2 n}}{(2 n)!}\,, 
$$ 
we get the result.  
\end{proof}

Since 
\begin{align}  
L(t)L''(t)&=\left(\frac{1}{2(1+t^2)^{3/2}}-\frac{1}{6\sqrt{1+t^2}}\right)L(t)\notag\\
&\quad +\left(\frac{\sqrt{1+t^2}}{6 t}+\frac{1}{2 t(1+t^2)^{3/2}}-\frac{2}{3 t\sqrt{1+t^2}}\right)L'(t)\notag\\
&\quad +\frac{1}{2}\left(\frac{1}{\sqrt{1+t^2}}-\sqrt{1+t^2}\right)L''(t)-\frac{t\sqrt{1+t^2}}{3}L^{(3)}(t)\,, 
\label{eq:convo02}
\end{align}
together with 
$$
\frac{1}{\sqrt{1+t^2}}=\sum_{j=0}^\infty\frac{(-1)^j(2 j-1)!!}{2^j\cdot j!}t^{2 j} 
$$ 
and 
$$
\frac{1}{(1+t^2)^{3/2}}=\sum_{j=0}^\infty\frac{(-1)^j(2 j+1)!!}{2^j\cdot j!}t^{2 j} 
$$ 
we have 
\begin{align*}  
&\left(\frac{1}{2(1+t^2)^{3/2}}-\frac{1}{6\sqrt{1+t^2}}\right)L(t)\\
&=\left(\sum_{j=0}^\infty\frac{(-1)^j(3 j+1)(2 j-1)!!}{3\cdot 2^j\cdot j!}t^{2 j}\right)\\
&\qquad\times\left(\sum_{l=0}^\infty\mathfrak C_{2 l}\frac{t^{2 l}}{(2 l)!}\right)\\
&=\sum_{n=0}^\infty\left(\sum_{l=0}^n\frac{(-1)^{n-l}(2 n-2 l-1)!!(3 n-3 l+1)}{3\cdot 2^{n-l}(n-l)!(2 l)!}\mathfrak C_{2 l}\right)t^{2 n}\,,   
\end{align*}  
\begin{align*}  
&\left(\frac{\sqrt{1+t^2}}{6 t}+\frac{1}{2 t(1+t^2)^{3/2}}-\frac{2}{3 t\sqrt{1+t^2}}\right)L'(t)\\
&=\left(\sum_{j=0}^\infty\frac{(-1)^j\bigl(-1+3(2 j+1)(2 j-1)-4(2 j-1)\bigr)(2 j-3)!!}{6\cdot 2^j\cdot j!}t^{2 j}\right)\left(\sum_{l=0}^\infty\mathfrak C_{2 l+2}\frac{t^{2 l}}{(2 l+1)!}\right)\\
&=\sum_{n=0}^\infty\left(\sum_{l=0}^n\frac{2(-1)^{n-l}(2 n-2 l-3)!!(n-l)(3 n-3 l-2)}{3\cdot 2^{n-l}(n-l)!(2 l+1)!}\mathfrak C_{2 l+2}\right)t^{2 n}\,,   
\end{align*}   
\begin{align*}  
&\frac{1}{2}\left(\frac{1}{\sqrt{1+t^2}}-\sqrt{1+t^2}\right)L''(t)\\
&=\left(\sum_{j=0}^\infty\frac{(-1)^j j(2 j-3)!!}{2^j\cdot j!}t^{2 j}\right)\left(\sum_{l=0}^\infty\mathfrak C_{2 l+2}\frac{t^{2 l}}{(2 l)!}\right)\\
&=\sum_{n=0}^\infty\left(\sum_{l=0}^n\frac{(-1)^{n-l}(2 n-2 l-3)!!(n-l)}{2^{n-l}(n-l)!(2 l)!}\mathfrak C_{2 l+2}\right)t^{2 n}    
\end{align*}
and 
\begin{align*}  
&\frac{t\sqrt{1+t^2}}{3}L^{(3)}(t)\\
&=\left(\sum_{j=0}^\infty\frac{(-1)^{j-1}(2 j-3)!!}{3\cdot 2^j\cdot j!}t^{2 j}\right)\left(\sum_{l=0}^\infty(2 l)\mathfrak C_{2 l+2}\frac{t^{2 l}}{(2 l)!}\right)\\
&=\sum_{n=0}^\infty\left(\sum_{l=0}^n\frac{(-1)^{n-l-1}(2 n-2 l-3)!!(n-l)(2 n)}{3\cdot 2^{n-l}(n-l)!(2 l)!}\mathfrak C_{2 l+2}\right)t^{2 n}\,.    
\end{align*} 
Thus, 
the right-hand side of (\ref{eq:convo02}) is equal to  
\begin{align*}  
&\sum_{n=0}^\infty\left(\sum_{l=0}^n\frac{(-1)^{n-l}(2 n-2 l-1)!!(3 n-3 l+1)}{3\cdot 2^{n-l}(n-l)!(2 l)!}\mathfrak C_{2 l}\right)t^{2 n}\\
&\quad +\sum_{n=0}^\infty\sum_{l=0}^n\frac{(-1)^{n-l}(6 n^2-6 n l+4 l^2-n+3 l)(2 n-2 l-3)!!}{3\cdot 2^{n-l}(n-l)!(2 l+1)!}\mathfrak C_{2 l+2}t^{2 n}\\
&=\sum_{n=0}^\infty\left(\sum_{l=0}^n\frac{(-1)^{n-l}(2 n-2 l-1)!!(3 n-3 l+1)}{3\cdot 2^{n-l}(n-l)!(2 l)!}\mathfrak C_{2 l}\right)t^{2 n}\\
&\quad +\sum_{n=0}^\infty\sum_{l=1}^{n+1}\frac{(-1)^{n-l-1}(6 n^2-6 n l+4 l^2+5 n-5 l+1)(2 n-2 l-1)!!}{3\cdot 2^{n-l+1}(n-l+1)!(2 l-1)!}\mathfrak C_{2 l}t^{2 n}\\
&=\sum_{n=0}^\infty\sum_{l=0}^{n+1}\frac{(-1)^{n-l-1}(2 l-1)(3 n^2-3 n l+2 l^2+4 n-3 l+1)(2 n-2 l-1)!!}{3\cdot 2^{n-l}(n-l+1)!(2 l)!}\mathfrak C_{2 l}t^{2 n}\,. 
\end{align*}
Since the left-hand side of (\ref{eq:convo02}) is equal to 
$$
\sum_{n=0}^\infty(\mathfrak C_{0}+\mathfrak C_{2})^n\frac{t^{2 n}}{(2 n)!}\,, 
$$ 
comparing the coefficients on both sides, we get the result of $(c_0+c_2)^n$.  

\begin{theorem}  
For $n\ge 0$,  
\begin{multline*}
(\mathfrak C_{0}+\mathfrak C_{2})^n\\ 
=(2 n)!\sum_{l=0}^{n+1}\frac{(-1)^{n-l-1}(2 l-1)(3 n^2-3 n l+2 l^2+4 n-3 l+1)(2 n-2 l-1)!!}{3\cdot 2^{n-l}(n-l+1)!(2 l)!}\mathfrak C_{2 l}\,. 
\end{multline*} 
\label{th:convo02}  
\end{theorem}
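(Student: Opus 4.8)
The plan is to recognize the left-hand side as a coefficient of an explicit generating function and then extract that coefficient, in direct parallel with the proof of Theorem~\ref{convo} but now at second order. Writing $L(t)=t/\mathrm{arcsinh}\,t=\sum_{n\ge 0}\mathfrak C_{2n}t^{2n}/(2n)!$, differentiating twice and reindexing gives $L''(t)=\sum_{n\ge 0}\mathfrak C_{2n+2}t^{2n}/(2n)!$, so the Cauchy product is
\begin{equation*}
L(t)L''(t)=\sum_{n=0}^\infty\left(\sum_{i+j=n}\frac{\mathfrak C_{2i}\,\mathfrak C_{2j+2}}{(2i)!\,(2j)!}\right)t^{2n}=\sum_{n=0}^\infty(\mathfrak C_{0}+\mathfrak C_{2})^n\frac{t^{2n}}{(2n)!}.
\end{equation*}
Hence it suffices to expand $L(t)L''(t)$ as an explicit even power series in $t$ and read off the coefficient of $t^{2n}$.

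The engine is the linear differential identity (\ref{eq:convo02}), which rewrites $L(t)L''(t)$ as a combination of $L,L',L'',L^{(3)}$ whose coefficients are rational in $t$ and $\sqrt{1+t^2}$. I would derive (\ref{eq:convo02}) from the first-order relation (\ref{eq:ll}): differentiating (\ref{eq:ll}) once yields an expression for $LL'$ in terms of $L,L',L''$, and differentiating that relation again yields an expression for $(L')^2+LL''$. To split off the unwanted $(L')^2$, I would square (\ref{eq:ll}) itself and reduce the arising powers $L^2,L^3,L^4$ back to $L,L',L''$ (using (\ref{eq:ll}) and its first derivative), obtaining a second independent relation; solving the resulting linear system isolates $LL''$ and gives (\ref{eq:convo02}). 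With (\ref{eq:convo02}) in hand, I would substitute the binomial series already recorded above for $1/\sqrt{1+t^2}$, $1/(1+t^2)^{3/2}$ and $\sqrt{1+t^2}$, form each of the four Cauchy products against the series for $L,L',L'',L^{(3)}$, and collect the coefficient of $t^{2n}$, producing four single sums over $l$, each weighted by a double factorial and a low-degree polynomial in $(n,l)$.

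The final and most delicate step is the consolidation. Two of the four sums carry $\mathfrak C_{2l}$ while the other two carry the shifted $\mathfrak C_{2l+2}$. I would reindex the two shifted sums by $l\mapsto l-1$, which automatically turns their double factorials $(2n-2l-3)!!$ into the common $(2n-2l-1)!!$ and shifts the denominator factorials accordingly, so that all four contributions range over $0\le l\le n+1$; after bringing them to the common denominator $3\cdot 2^{n-l}(n-l+1)!\,(2l)!$ I would add the four polynomial numerators. The main obstacle will be verifying that this sum of polynomials collapses exactly to $(-1)^{n-l-1}(2l-1)\bigl(3n^2-3nl+2l^2+4n-3l+1\bigr)$, which for each fixed $n$ is a finite polynomial identity in $l$ and can be confirmed by matching coefficients. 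Equating the result with $\sum_{n}(\mathfrak C_{0}+\mathfrak C_{2})^n t^{2n}/(2n)!$ and multiplying through by $(2n)!$ then yields the stated formula.
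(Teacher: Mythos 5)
Your proposal follows essentially the same route as the paper: both arguments rest on the differential identity (\ref{eq:convo02}) for $L(t)L''(t)$, expand its four terms via the binomial series for powers of $\sqrt{1+t^2}$, reindex the shifted sums to a common range $0\le l\le n+1$, and compare coefficients with $\sum_{n}(\mathfrak C_{0}+\mathfrak C_{2})^n t^{2n}/(2n)!$ --- and your sketch for deriving (\ref{eq:convo02}) from (\ref{eq:ll}) (differentiate to get $(L')^2+LL''$, obtain a second relation by squaring (\ref{eq:ll}) and reducing powers of $L$, then solve the $2\times 2$ linear system for $LL''$) is sound and in fact supplies a justification that the paper asserts without proof. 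One minor slip: of the four Cauchy products, only the one against $L(t)$ carries $\mathfrak C_{2l}$, while the three against $L'$, $L''$, $L^{(3)}$ all carry $\mathfrak C_{2l+2}$ (a one-and-three split, not two-and-two), but this does not affect your consolidation step.
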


Similarly, the convolution of $(c_2+c_2)^n$ can be given as follows.  

\begin{theorem}  
For $n\ge 0$,  
\begin{align*}
&(\mathfrak C_{2}+\mathfrak C_{2})^n\\ 
&=\frac{(2 n)!}{30}\sum_{l=0}^{n}\frac{(-1)^{n-l}(10 n-8 l+5)(2 n-2 l-3)!!}{2^{n-l}(n-l)!(2 l)!}\mathfrak C_{2 l+4}\\
&\quad -\frac{(2 n)!}{3}\sum_{l=0}^{n}\frac{(-1)^{n-l}(6 l+1)(2 n-2 l+1)!!}{2^{n-l}(n-l)!(2 l)!}\mathfrak C_{2 l+2}\\
&\quad -\frac{(2 n)!}{30}\sum_{l=0}^{n}\frac{(-1)^{n-l}(160 l^3-220 l^2+72 l-1)(2 n-2 l+1)!!}{2^{n-l}(n-l)!(2 l)!}\mathfrak C_{2 l}\,. 
\end{align*} 
\label{th:convo22}  
\end{theorem}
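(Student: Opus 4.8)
The plan is to run, one differentiation order higher, the same generating-function scheme that produced Theorems~\ref{convo} and~\ref{th:convo02}. Keeping the notation $L(t)=t/\mathrm{arcsinh}\,t=\sum_{n\ge0}\mathfrak C_{2n}t^{2n}/(2n)!$, the starting point is that $L''(t)=\sum_{l\ge0}\mathfrak C_{2l+2}t^{2l}/(2l)!$, so that
$$
L''(t)^2=\sum_{n=0}^\infty\Bigl(\sum_{i+j=n}\frac{\mathfrak C_{2i+2}\mathfrak C_{2j+2}}{(2i)!(2j)!}\Bigr)t^{2n}=\sum_{n=0}^\infty(\mathfrak C_2+\mathfrak C_2)^n\frac{t^{2n}}{(2n)!}.
$$
Hence Theorem~\ref{th:convo22} is exactly the statement obtained by writing $L''(t)^2$ as an explicit even power series in $t$ and reading off the coefficient of $t^{2n}$ (then multiplying by $(2n)!$). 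This is the direct analogue of the fact, used for Theorem~\ref{th:convo02}, that $L(t)L''(t)=\sum_n(\mathfrak C_0+\mathfrak C_2)^n t^{2n}/(2n)!$.

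First I would produce a differential identity of the form
$$
L''(t)^2=A_4(t)L^{(4)}(t)+A_3(t)L^{(3)}(t)+A_2(t)L''(t)+A_1(t)L'(t)+A_0(t)L(t),
$$
where, by parity of $L''(t)^2$, the functions $A_0,A_2,A_4$ are even and $A_1,A_3$ are odd, each an algebraic function built from powers of $(1+t^2)^{1/2}$ (positive and negative) times powers of $t$. This is the order-four counterpart of the identity~(\ref{eq:convo02}), which expressed $L(t)L''(t)$ through $L,L',L'',L^{(3)}$. I would obtain it by differentiating the basic relation~(\ref{eq:ll}) repeatedly and, at each stage, reducing the nonlinear products $L^2,\,LL',\,(L')^2,\,LL'',\dots$ back to linear-in-derivatives form by means of~(\ref{eq:ll}) together with its first consequence $2LL'=\frac{tL^2}{1+t^2}-t\sqrt{1+t^2}\,L''$. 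Equivalently, and more systematically, one may use that $s=\mathrm{arcsinh}\,t$ solves the linear equation $(1+t^2)s''+ts'=0$; then, from $L\,s=t$ and $s'=(1+t^2)^{-1/2}$, every $L^{(k)}$ and the product $L''(t)^2$ become explicit rational expressions in $t$, $(1+t^2)^{1/2}$ and negative powers of $s$, and matching the coefficients of $s^{-1},\dots,s^{-5}$ yields a $5\times5$ linear system whose solution gives the $A_k(t)$.

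With the $A_k$ in hand the rest is mechanical and mirrors the display following~(\ref{eq:convo02}). I would insert the binomial expansions
$$
\sqrt{1+t^2}=\sum_j\frac{(-1)^{j-1}(2j-3)!!}{2^jj!}t^{2j},\quad\frac1{\sqrt{1+t^2}}=\sum_j\frac{(-1)^j(2j-1)!!}{2^jj!}t^{2j},\quad\frac1{(1+t^2)^{3/2}}=\sum_j\frac{(-1)^j(2j+1)!!}{2^jj!}t^{2j}
$$
(and, should the derived $A_k$ require it, the analogous $(1+t^2)^{-5/2}$ series, which the identity $t^2=(1+t^2)-1$ lets one consolidate back into the two orders above) into each $A_k(t)$, multiply by the corresponding series for $L^{(k)}(t)$, and take Cauchy products. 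After a shift of summation index the terms carrying $L^{(4)},L^{(3)}$ collapse into the single $\mathfrak C_{2l+4}$-sum with double factorial $(2n-2l-3)!!$, the terms carrying $L'',L'$ into the $\mathfrak C_{2l+2}$-sum with $(2n-2l+1)!!$, and the $L$-term into the $\mathfrak C_{2l}$-sum, again with $(2n-2l+1)!!$. Collecting the coefficient of $t^{2n}$ and multiplying by $(2n)!$ should reproduce the three displayed sums.

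The main obstacle is the first step, pinning down the exact coefficient functions $A_k(t)$: the elimination that isolates $(L'')^2$ requires combining several differentiated copies of~(\ref{eq:ll}) (equivalently, solving the $5\times5$ system in the $s^{-j}$-expansion), and a single slip there corrupts all three output sums. The second difficulty is purely bookkeeping but heavy: after the Cauchy products one must carry out the index shifts that turn the various $\mathfrak C$-subscripts uniformly into $\mathfrak C_{2l},\mathfrak C_{2l+2},\mathfrak C_{2l+4}$, merge neighbouring double-factorial factors into one $(2n-2l\mp\cdots)!!$, and simplify the polynomial weights — in particular verifying that the $\mathfrak C_{2l}$-coefficient collapses to the stated cubic $160l^3-220l^2+72l-1$ while the $\mathfrak C_{2l+4}$- and $\mathfrak C_{2l+2}$-weights reduce to $10n-8l+5$ and $6l+1$. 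As a safeguard I would check the derived identity and the final formula at $n=0$ and $n=1$ against a direct expansion of $L''(t)^2$ before trusting the general case.
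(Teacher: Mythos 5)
Your overall strategy is exactly the paper's: identify $(L''(t))^2$ as the exponential generating function of $(\mathfrak C_2+\mathfrak C_2)^n$, express $(L'')^2$ as a linear combination of derivatives of $L$ with coefficients built from $t$ and $\sqrt{1+t^2}$, then expand by binomial series and compare coefficients of $t^{2n}$. The genuine gap is in your central ansatz: you truncate the linear combination at $L^{(4)}$, and no such identity exists — the fifth derivative $L^{(5)}$ is unavoidable. Your own systematic scheme shows this. Put $u=\sqrt{1+t^2}$ and $s=\mathrm{arcsinh}\,t$, so $L=ts^{-1}$, $L'=s^{-1}-\tfrac{t}{u}s^{-2}$, and
\begin{align*}
L''&=-\Bigl(\frac{1}{u}+\frac{1}{u^{3}}\Bigr)s^{-2}+\frac{2t}{u^{2}}\,s^{-3}\,,\\
(L'')^2&=\Bigl(\frac{1}{u}+\frac{1}{u^{3}}\Bigr)^{2}s^{-4}-\frac{4t}{u^{2}}\Bigl(\frac{1}{u}+\frac{1}{u^{3}}\Bigr)s^{-5}+\frac{4t^{2}}{u^{4}}\,s^{-6}\,.
\end{align*}
Since $\frac{d}{dt}\bigl(f(t)s^{-k}\bigr)=f'(t)s^{-k}-kf(t)u^{-1}s^{-k-1}$ and $\mathbb{C}(t,u)$ is closed under differentiation, each $L^{(k)}$ lies in the span of $s^{-1},\dots,s^{-(k+1)}$ over $\mathbb{C}(t,u)$; hence $A_4L^{(4)}+A_3L^{(3)}+A_2L''+A_1L'+A_0L$ involves only $s^{-1},\dots,s^{-5}$. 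As $s$ is transcendental over $\mathbb{C}(t,u)$, the powers $s^{-1},\dots,s^{-6}$ are linearly independent over that field, so the nonzero $s^{-6}$-component $4t^{2}/u^{4}$ of $(L'')^2$ can never be matched: your ``$5\times5$ linear system'' is in truth six equations in five unknowns, and it is inconsistent. This is precisely why the identity the paper starts from has the leading term $-\frac{t\sqrt{1+t^2}}{30}L^{(5)}(t)$.

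The repair stays entirely within your framework: enlarge the ansatz by a sixth unknown $A_5(t)L^{(5)}(t)$ (odd, by parity), match the coefficients of $s^{-1},\dots,s^{-6}$, and you recover the paper's differential identity for $(L'')^2$ in terms of $L,L',\dots,L^{(5)}$; after that, your expansion-and-comparison plan proceeds as in the paper (your check at $n=0,1$ would in fact have exposed the missing term). One further bookkeeping correction: the three displayed sums of the theorem do not sort by derivative order the way you describe. In the paper's computation the $L^{(5)}$ and $L^{(4)}$ terms feed the $\mathfrak C_{2l+4}$ sum, but the $L^{(3)}$ and $L''$ terms each split between the $\mathfrak C_{2l+2}$ and $\mathfrak C_{2l}$ sums (their coefficient functions contain two different powers of $t$), while $L'$ and $L$ feed only the $\mathfrak C_{2l}$ sum; this redistribution is where the cubic weight $160l^3-220l^2+72l-1$ arises.
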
 

\begin{proof}  
We know that 
\begin{align*}
\bigl(L''(t)\bigr)^2&=-\frac{t\sqrt{1+t^2}}{30}L^{(5)}(t)+\frac{1}{6}\left(\frac{1}{\sqrt{1+t^2}}-2\sqrt{1+t^2}\right)L^{(4)}(t)\\
&\quad -\frac{3 t+2 t^2}{3(1+t^2)^{3/2}}L^{(3)}(t)-\frac{2+t^2}{6(1+t^2)^{3/2}}L''(t)-\frac{t}{30(1+t^2)^{3/2}}L'(t)\\
&\quad +\frac{1}{30(1+t^2)^{3/2}}L(t)\,.
\end{align*} 
Since 
\begin{align*}  
&-\frac{t\sqrt{1+t^2}}{30}L^{(5)}(t)\\
&=-\frac{1}{30}\sum_{j=0}^\infty\frac{(-1)^{j-1}(2 j-3)!!}{2^j\cdot j!}t^{2 j}\sum_{l=3}^\infty\mathfrak C_{2 l}\frac{t^{2 l-4}}{(2 l-5)!}\\
&=\frac{1}{30}\sum_{n=0}^\infty\left(\frac{(-1)^{n-l}(2 n-2 l-3)!!(2 l)}{2^{n-l}(n-l)!(2 l)!}\mathfrak C_{2 l+4}\right)t^{2 l}\,, 
\end{align*} 
\begin{align*}  
&\frac{1}{6}\left(\frac{1}{\sqrt{1+t^2}}-2\sqrt{1+t^2}\right)L^{(4)}(t)\\
&=-\frac{1}{6}\left(\sum_{j=0}^\infty\frac{(-1)^{j}(2 j-1)!!}{2^j\cdot j!}t^{2 j}\right.\\
&\qquad\qquad\left.-\sum_{j=0}^\infty\frac{(-1)^{j-1}2(2 j-3)!!}{2^j\cdot j!}t^{2 j}\right)\sum_{l=2}^\infty\mathfrak C_{2 l}\frac{t^{2 l-4}}{(2 l-4)!}\\
&=\frac{1}{6}\sum_{n=0}^\infty\left(\frac{(-1)^{n-l}(2 n-2 l-3)!!(2 n-2 l+1)}{2^{n-l}(n-l)!(2 l)!}\mathfrak C_{2 l+4}\right)t^{2 l}\,, 
\end{align*}
\begin{align*}  
&-\frac{3 t+2 t^2}{3(1+t^2)^{3/2}}L^{(3)}(t)\\
&=-\sum_{j=0}^\infty\frac{(-1)^{j}(2 j+1)!!}{2^j\cdot j!}t^{2 j}\sum_{l=2}^\infty\mathfrak C_{2 l}\frac{t^{2 l-2}}{(2 l-3)!}\\
&\quad -\frac{2}{3}\sum_{j=0}^\infty\frac{(-1)^{j}(2 j+1)!!}{2^j\cdot j!}t^{2 j}\sum_{l=2}^\infty\mathfrak C_{2 l}\frac{t^{2 l}}{(2 l-3)!}\\
&=-\sum_{n=0}^\infty\left(\frac{(-1)^{n-l}(2 n-2 l+1)!!}{2^{n-l}(n-l)!(2 l)!}\mathfrak C_{2 l+2}\right)t^{2 l}\\
&\quad -\frac{2}{3}\sum_{n=0}^\infty\left(\frac{(-1)^{n-l}(2 n-2 l+1)!!(2 l)(2 l-1)(2 l-2)}{2^{n-l}(n-l)!(2 l)!}\mathfrak C_{2 l}\right)t^{2 l}\,, 
\end{align*} 
\begin{align*}  
&--\frac{2+t^2}{6(1+t^2)^{3/2}}L''(t)\\
&=-\frac{2}{6}\sum_{j=0}^\infty\frac{(-1)^{j}(2 j+1)!!}{2^j\cdot j!}t^{2 j}\sum_{l=1}^\infty\mathfrak C_{2 l}\frac{t^{2 l-2}}{(2 l-2)!}\\
&\quad -\frac{1}{6}\sum_{j=0}^\infty\frac{(-1)^{j}(2 j+1)!!}{2^j\cdot j!}t^{2 j}\sum_{l=1}^\infty\mathfrak C_{2 l}\frac{t^{2 l}}{(2 l-2)!}\\ 
&=-\frac{1}{3}\sum_{n=0}^\infty\left(\frac{(-1)^{n-l}(2 n-2 l+1)!!}{2^{n-l}(n-l)!(2 l)!}\mathfrak C_{2 l+2}\right)t^{2 l}\\ 
&\quad -\frac{1}{6}\sum_{n=0}^\infty\left(\frac{(-1)^{n-l}(2 n-2 l+1)!!(2 l)(2 l-1)}{2^{n-l}(n-l)!(2 l)!}\mathfrak C_{2 l}\right)t^{2 l}  
\end{align*}
and  
\begin{align*}  
&-\frac{t}{30(1+t^2)^{3/2}}L'(t)+\frac{1}{30(1+t^2)^{3/2}}L(t)\\
&=-\frac{1}{30}\sum_{j=0}^\infty\frac{(-1)^{j}(2 j+1)!!}{2^j\cdot j!}t^{2 j}\sum_{l=1}^\infty\mathfrak C_{2 l}\frac{t^{2 l}}{(2 l-1)!}\\
&\quad +\frac{1}{30}\sum_{j=0}^\infty\frac{(-1)^{j}(2 j+1)!!}{2^j\cdot j!}t^{2 j}\sum_{l=0}^\infty\mathfrak C_{2 l}\frac{t^{2 l}}{(2 l)!}\\
&=-\frac{1}{30}\sum_{n=0}^\infty\left(\frac{(-1)^{n-l}(2 n-2 l+1)!!(2 l-1)}{2^{n-l}(n-l)!(2 l)!}\mathfrak C_{2 l}\right)t^{2 l}\,, 
\end{align*}
we have 
\begin{align*}  
&\bigl(L''(t)\bigr)^2\\
&=\frac{1}{30}\sum_{n=0}^\infty\left(\sum_{l=0}^{n}\frac{(-1)^{n-l}(10 n-8 l+5)(2 n-2 l-3)!!}{2^{n-l}(n-l)!(2 l)!}\mathfrak C_{2 l+4}\right)t^{2 l}\\
&\quad -\frac{1}{3}\sum_{n=0}^\infty\left(\sum_{l=0}^{n}\frac{(-1)^{n-l}(6 l+1)(2 n-2 l+1)!!}{2^{n-l}(n-l)!(2 l)!}\mathfrak C_{2 l+2}\right)t^{2 l}\\
&\quad -\frac{1}{30}\sum_{n=0}^\infty\left(\sum_{l=0}^{n}\frac{(-1)^{n-l}(160 l^3-220 l^2+72 l-1)(2 n-2 l+1)!!}{2^{n-l}(n-l)!(2 l)!}\mathfrak C_{2 l}\right)t^{2 l}\,. 
\end{align*} 
Since  
$$
\bigl(L''(t)\bigr)^2=\sum_{n=0}^\infty(\mathfrak C_{2}+\mathfrak C_{2})^n\frac{t^{2 n}}{(2 n)!}\,, 
$$ 
comparing the coefficients on both sides, we get the desired result.  
\end{proof}

\subsection{Higher-order convolutions}  

The convolution identity for three Cauchy numbers with level $2$ can be given as follows.  

\begin{theorem}  
For $n\ge 1$, 
\begin{align*}
&(\mathfrak C_{0}+\mathfrak C_{0}+\mathfrak C_{0})^n\\
&=(2 n-1)(n-1)\mathfrak C_{2 n}+n(2 n-1)(2 n-3)^2\mathfrak C_{2 n-2}\,. 
\end{align*} 
\label{th:convo000}
\end{theorem}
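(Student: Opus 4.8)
The plan is to derive a closed form for the generating function $L(t)^3$ as a combination of $L,L',L''$ with \emph{polynomial} coefficients, and then read off the coefficient of $t^{2n}/(2n)!$. First I would record that, by the convolution notation and $L(t)=\sum_{n\ge0}\mathfrak C_{2n}t^{2n}/(2n)!$, one has $L(t)^3=\sum_{n\ge0}(\mathfrak C_0+\mathfrak C_0+\mathfrak C_0)^n\,t^{2n}/(2n)!$, so it suffices to expand $L(t)^3$.

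The engine is the quadratic relation (\ref{eq:ll}), $L^2=-t\sqrt{1+t^2}\,L'+\sqrt{1+t^2}\,L$. Writing $L^3=L\cdot L^2$ and substituting (\ref{eq:ll}) gives $L^3=-t\sqrt{1+t^2}\,LL'+\sqrt{1+t^2}\,L^2$. The second summand is handled by applying (\ref{eq:ll}) once more, $\sqrt{1+t^2}\,L^2=-t(1+t^2)L'+(1+t^2)L$, whose coefficients are already polynomials. For the first summand I would use $LL'=\tfrac12(L^2)'$ and differentiate (\ref{eq:ll}); the crucial simplification is that the derivative of its right-hand side collapses to $\tfrac{t}{\sqrt{1+t^2}}L-\tfrac{t^2}{\sqrt{1+t^2}}L'-t\sqrt{1+t^2}L''$ (the two $\sqrt{1+t^2}\,L'$ contributions cancel). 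Multiplying by $-t\sqrt{1+t^2}$ then clears every square root and yields $-\tfrac{t^2}{2}L+\tfrac{t^3}{2}L'+\tfrac{t^2(1+t^2)}{2}L''$.

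Collecting the two pieces I expect to reach
\[
L(t)^3=\Bigl(1+\tfrac{t^2}{2}\Bigr)L-\Bigl(t+\tfrac{t^3}{2}\Bigr)L'+\tfrac{t^2+t^4}{2}\,L''.
\]
The final step is routine coefficient extraction. Using $tL'=\sum_{n}2n\,\mathfrak C_{2n}t^{2n}/(2n)!$ and $t^2L''=\sum_{n}2n(2n-1)\mathfrak C_{2n}t^{2n}/(2n)!$ together with their $t^2$-shifts, the coefficient of $t^{2n}/(2n)!$ separates into a $\mathfrak C_{2n}$ part equal to $1-2n+n(2n-1)=(2n-1)(n-1)$ and a $\mathfrak C_{2n-2}$ part equal to $n(2n-1)\bigl(1-2(n-1)+2(n-1)(2n-3)\bigr)=n(2n-1)(2n-3)^2$, which is exactly the assertion.

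The main obstacle is the cancellation of all $\sqrt{1+t^2}$ factors: a priori $L^3$ is only a combination of $L,L',L''$ with coefficients in $\mathbb{Q}[t,\sqrt{1+t^2},1/\sqrt{1+t^2}]$, and only the precise arithmetic of differentiating (\ref{eq:ll})—in particular the cancellation $-\tfrac{1+2t^2}{\sqrt{1+t^2}}+\sqrt{1+t^2}=-\tfrac{t^2}{\sqrt{1+t^2}}$ and the subsequent multiplication by $t\sqrt{1+t^2}$—forces the result to be polynomial. Verifying this cancellation, rather than the bookkeeping of the final factors $(2n-1)(n-1)$ and $n(2n-1)(2n-3)^2$, is where the care lies.
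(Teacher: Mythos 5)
Your proposal is correct and follows essentially the same route as the paper: the paper likewise derives $L(t)L'(t)=\frac{t}{2\sqrt{1+t^2}}L-\frac{t^2}{2\sqrt{1+t^2}}L'-\frac{t\sqrt{1+t^2}}{2}L''$ from (\ref{eq:ll}) (your $LL'=\tfrac12(L^2)'$ differentiation), substitutes it together with (\ref{eq:ll}) into $L^3=-t\sqrt{1+t^2}\,LL'+\sqrt{1+t^2}\,L^2$ to get the polynomial-coefficient form $L^3=\bigl(1+\tfrac{t^2}{2}\bigr)L-\bigl(t+\tfrac{t^3}{2}\bigr)L'+\tfrac{t^2(1+t^2)}{2}L''$, and then extracts coefficients exactly as you do. Your arithmetic, including the cancellation of the $\sqrt{1+t^2}\,L'$ terms and the final factors $(2n-1)(n-1)$ and $n(2n-1)(2n-3)^2$, checks out.
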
 
\begin{proof}  
From (\ref{eq:ll}), 
$$
L(t)L(t)'=\frac{t}{2\sqrt{1+t^2}}L(t)-\frac{t^2}{2\sqrt{1+t^2}}L(t)'-\frac{t\sqrt{1+t^2}}{2}L(t)''\,. 
$$ 
So, 
\begin{align*}  
L(t)^3&=-t\sqrt{1+t^2}L(t)L(t)'+\sqrt{1+t^2}L(t)^2\\
&=-t\sqrt{1+t^2}\left(\frac{t}{2\sqrt{1+t^2}}L(t)-\frac{t^2}{2\sqrt{1+t^2}}L(t)'-\frac{t\sqrt{1+t^2}}{2}L(t)''\right)\\
&\quad +\sqrt{1+t^2}\bigl(-t\sqrt{1+t^2}L(t)'+\sqrt{1+t^2}L(t)\bigr)\\
&=\left(1+\frac{t^2}{2}\right)L(t)-\left(t+\frac{t^3}{2}\right)L(t)'+\frac{t^2(1+t^2)}{2}L(t)''\\
&=\sum_{n=0}^\infty\mathfrak C_{2 n}\frac{t^{2 n}}{(2 n)!}+\frac{1}{2}\sum_{n=0}^\infty(2 n+2)(2 n+1)\mathfrak C_{2 n}\frac{t^{2 n+2}}{(2 n+2)!}\\ 
&\quad -\sum_{n=1}^\infty\mathfrak C_{2 n}\frac{t^{2 n}}{(2 n-1)!}-\frac{1}{2}\sum_{n=0}^\infty(2 n+2)(2 n+1)(2 n)\mathfrak C_{2 n}\frac{t^{2 n+2}}{(2 n+2)!}\\ 
&\quad +\frac{1}{2}\sum_{n=1}^\infty\mathfrak C_{2 n}\frac{t^{2 n}}{(2 n-2)!}\\
&\quad +\frac{1}{2}\sum_{n=0}^\infty(2 n+2)(2 n+1)(2 n)(2 n-1)\mathfrak C_{2 n}\frac{t^{2 n+2}}{(2 n+2)!}\\ 
&=\sum_{n=0}^\infty\left(1-2 n+\frac{(2 n)(2 n-1)}{2}\right)\mathfrak C_{2 n}\frac{t^{2 n}}{(2 n)!}\\
&\quad +\frac{1}{2}\sum_{n=1}^\infty(2 n)(2 n-1)\bigl(1-(2 n-2)+(2 n-2)(2 n-3)\bigr)\mathfrak C_{2 n-2}\frac{t^{2 n}}{(2 n)!}\\
&=\sum_{n=0}^\infty(2 n-1)(n-1)\mathfrak C_{2 n}\frac{t^{2 n}}{(2 n)!}+\sum_{n=1}^\infty n(2 n-1)(2 n-3)^2\mathfrak C_{2 n-2}\frac{t^{2 n}}{(2 n)!}\,. 
\end{align*}
Comparing the coefficients with 
$$
L(t)^3=\sum_{n=0}^\infty(\mathfrak C_{0}+\mathfrak C_{0}+\mathfrak C_{0})^n\frac{t^{2 n}}{(2 n)!}\,, 
$$ 
we get the result.  
\end{proof}

The convolution identity for four Cauchy numbers with level $2$ can be given as follows.  

\begin{theorem}  
For $n\ge 1$, 
\begin{align*}
&(\mathfrak C_{0}+\mathfrak C_{0}+\mathfrak C_{0}+\mathfrak C_{0})^n\\
&=\frac{(2 n)!}{6}\sum_{l=0}^n\frac{(-1)^{n-l}(2 n-2 l-3)!!(2 l-1)(2 l-2)(2 l-3)}{2^{n-l}(n-l)!(2 l)!}\mathfrak C_{2 l}\\
&\quad +\frac{(2 n)!}{6}\sum_{l=1}^n\frac{(-1)^{n-l}(2 n-2 l-3)!!(2 l)(2 l-1)(2 l-3)^3}{2^{n-l}(n-l)!(2 l)!}\mathfrak C_{2 l-2}\,. 
\end{align*} 
\label{th:convo0000}
\end{theorem}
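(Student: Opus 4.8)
The plan is to follow the same strategy as in the proofs of Theorems \ref{convo}, \ref{th:convo000}, and \ref{th:convo22}: first reduce the fourth power $L(t)^4$ to a linear combination of $L, L', L'', L'''$ with coefficients that are polynomials in $t$ times a power of $\sqrt{1+t^2}$, then expand that power as a binomial series, multiply by the Taylor series of each derivative of $L$, and finally read off the coefficient of $t^{2n}$ and compare with $L(t)^4 = \sum_{n\ge 0}(\mathfrak C_0+\mathfrak C_0+\mathfrak C_0+\mathfrak C_0)^n\frac{t^{2n}}{(2n)!}$.

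First I would derive the reduction formula for $L^4$. Writing $P=\sqrt{1+t^2}$, the defining relation (\ref{eq:ll}) can be recast in Riccati form $L'=\frac{1}{t}L-\frac{1}{tP}L^2$, equivalently $L^2=P(L-tL')$. Differentiating this relation and repeatedly using it to eliminate $L'$ on the right-hand side expresses each higher derivative as a polynomial in $L$ with coefficients rational in $t$ and $P$; concretely $L''=\frac{2L^3}{t^2P^2}-\frac{(2+t^2)L^2}{t^2P^3}$ and, after one more differentiation, $L'''=-\frac{6L^4}{t^3P^3}+\frac{6L^3}{t^3P^4}+\frac{(4+t^2)L^2}{tP^5}$. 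Solving the last identity for $L^4$ and substituting the known expressions for $L^3$ (from the proof of Theorem \ref{th:convo000}) and $L^2$ yields
$$L^4=\frac{(6+t^2)P}{6}L-\frac{t(6+t^2)P}{6}L'+\frac{t^2 P}{2}L''-\frac{t^3 P^3}{6}L'''.$$
The crucial simplification is that $P^3=(1+t^2)P$, so $t^3P^3=(t^3+t^5)P$ and \emph{every} coefficient carries only the single factor $P=\sqrt{1+t^2}$. Consequently only the one expansion $P=\sum_{j\ge 0}\frac{(-1)^{j-1}(2j-3)!!}{2^j j!}t^{2j}$ is needed, which is exactly why the final answer contains only the double factorial $(2n-2l-3)!!$.

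Next I would substitute $L^{(d)}=\sum_m\mathfrak C_{2m}\frac{t^{2m-d}}{(2m-d)!}$ into the bracketed operator $Q$ defined by $L^4=\frac{P}{6}Q$ and collect the coefficient of $t^{2s}$. Each monomial $t^pL^{(d)}$ contributes to $\mathfrak C_{2s}$ when $d=p$ and to $\mathfrak C_{2s-2}$ when $d-p=-2$, so the seven monomials of $Q$ split cleanly into a group of four and a group of three. The coefficient of $\mathfrak C_{2s}$ is $\frac{1}{(2s)!}\bigl(6-6(2s)+3(2s)(2s-1)-(2s)(2s-1)(2s-2)\bigr)$, which factors as $-\frac{(2s-1)(2s-2)(2s-3)}{(2s)!}$; the coefficient of $\mathfrak C_{2s-2}$ is $\frac{1}{(2s-2)!}\bigl(1-(2s-2)-(2s-2)(2s-3)(2s-4)\bigr)=-\frac{(2s-3)^3}{(2s-2)!}$. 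Terms with a factorial of a negative argument are read as zero, which also supplies the correct summation ranges $l\ge 0$ and $l\ge 1$.

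Finally I would convolve with the series for $P$: since $(2n)![t^{2n}]L^4=\frac{(2n)!}{6}\sum_{s=0}^n a_{n-s}[t^{2s}]Q$ with $a_j=\frac{(-1)^{j-1}(2j-3)!!}{2^j j!}$, inserting the two factored coefficients together with $a_{n-s}$, and using $\frac{(2l)(2l-1)}{(2l)!}=\frac{1}{(2l-2)!}$ to rewrite the second group, reproduces verbatim the two sums in the statement (the two sign factors $(-1)^{n-s-1}$ and the leading minus of each polynomial combining to $(-1)^{n-s}$). The only real obstacle is bookkeeping: deriving the $L^4$ reduction correctly and verifying that the factorial-weighted coefficients collapse to the stated cubic $(2l-1)(2l-2)(2l-3)$ and to $(2l)(2l-1)(2l-3)^3$. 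Once the identity $P^3=(1+t^2)P$ is used to keep a single square root, the remaining collection is entirely parallel to that already carried out for $L^2$ and $L^3$.
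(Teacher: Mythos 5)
Your proposal is correct and follows essentially the same approach as the paper: the reduction $L^4=\frac{(6+t^2)\sqrt{1+t^2}}{6}L-\frac{t(6+t^2)\sqrt{1+t^2}}{6}L'+\frac{t^2\sqrt{1+t^2}}{2}L''-\frac{(t^3+t^5)\sqrt{1+t^2}}{6}L^{(3)}$ is exactly the identity the paper uses, and your subsequent expansion of the single factor $\sqrt{1+t^2}$, the factorizations $-(2l-1)(2l-2)(2l-3)$ and $-(2l-3)^3$, and the coefficient comparison match the paper's computation. The only (immaterial) difference is how the reduction is obtained: you invert your expression for $L'''$ as a polynomial in $L$, whereas the paper multiplies the $L^3$ reduction from Theorem \ref{th:convo000} by $L$ and reuses the product formulas for $L^2$, $LL'$, and $LL''$ from (\ref{eq:ll}), Theorem \ref{th:convo000}, and (\ref{eq:convo02}).
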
 
\begin{proof}  
From the proof of Theorem \ref{th:convo000} and Theorem \ref{th:convo02}, we get 
\begin{align*}  
L(t)^4&=\left(1+\frac{t^2}{2}\right)L(t)-\left(t+\frac{t^3}{2}\right)L(t)'+\frac{t^2(1+t^2)}{2}L(t)''\\
&=\frac{(6+t^2)\sqrt{1+t^2}}{6}L(t)-\frac{t(6+t^2)\sqrt{1+t^2}}{6}L'(t)\\
&\quad +\frac{t^2\sqrt{1+t^2}}{2}L''(t)-\frac{(t^3+t^5)\sqrt{1+t^2}}{6}L^{(3)}\,.
\end{align*} 
Since 
\begin{align*}  
&\frac{(6+t^2)\sqrt{1+t^2}}{6}L(t)\\ 
&=\left(\sum_{j=0}^\infty\frac{(-1)^{j-1}(2 j-3)!!}{2^j\cdot j!}t^{2 j}\right)\left(\sum_{l=0}^\infty\mathfrak C_{2 l}\frac{t^{2 l}}{(2 l)!}\right)\\
&\quad +\frac{1}{6}\left(\sum_{j=0}^\infty\frac{(-1)^{j-1}(2 j-3)!!}{2^j\cdot j!}t^{2 j}\right)\left(\sum_{l=1}^\infty(2 l)(2 l-1)\mathfrak C_{2 l-2}\frac{t^{2 l}}{(2 l)!}\right)\,, 
\end{align*} 
\begin{align*}  
&-\frac{t(6+t^2)\sqrt{1+t^2}}{6}L'(t)\\ 
&=-\left(\sum_{j=0}^\infty\frac{(-1)^{j-1}(2 j-3)!!}{2^j\cdot j!}t^{2 j}\right)\left(\sum_{l=0}^\infty\mathfrak C_{2 l}\frac{t^{2 l}}{(2 l)!}\right)\\
&\quad -\frac{1}{6}\left(\sum_{j=0}^\infty\frac{(-1)^{j-1}(2 j-3)!!}{2^j\cdot j!}t^{2 j}\right)\left(\sum_{l=1}^\infty(2 l)(2 l-1)\mathfrak C_{2 l-2}\frac{t^{2 l}}{(2 l)!}\right)\,, 
\end{align*} 
\begin{align*}  
&\frac{t^2\sqrt{1+t^2}}{2}L''(t)\\ 
&=\frac{1}{2}\left(\sum_{j=0}^\infty\frac{(-1)^{j-1}(2 j-3)!!}{2^j\cdot j!}t^{2 j}\right)\left(\sum_{l=0}^\infty(2 l)(2 l-1)\mathfrak C_{2 l}\frac{t^{2 l}}{(2 l)!}\right) 
\end{align*} 
and 
\begin{align*}  
&-\frac{(t^3+t^5)\sqrt{1+t^2}}{6}L^{(3)}\\ 
&=-\frac{1}{6}\left(\sum_{j=0}^\infty\frac{(-1)^{j-1}(2 j-3)!!}{2^j\cdot j!}t^{2 j}\right)\left(\sum_{l=0}^\infty\frac{(2 l)!}{(2 l-3)!}\mathfrak C_{2 l}\frac{t^{2 l}}{(2 l)!}\right)\\
&\quad -\frac{1}{6}\left(\sum_{j=0}^\infty\frac{(-1)^{j-1}(2 j-3)!!}{2^j\cdot j!}t^{2 j}\right)\left(\sum_{l=1}^\infty\frac{(2 l)!}{(2 l-5)!}\mathfrak C_{2 l-2}\frac{t^{2 l}}{(2 l)!}\right)\,, 
\end{align*} 
we have 
\begin{align*}  
L(t)^4 
&=\frac{1}{6}\left(\sum_{j=0}^\infty\frac{(-1)^{j}(2 j-3)!!}{2^j\cdot j!}t^{2 j}\right)\left(\sum_{l=0}^\infty(2 l-1)(2 l-2)(2 l-3)\mathfrak C_{2 l}\frac{t^{2 l}}{(2 l)!}\right)\\
&\quad +\frac{1}{6}\left(\sum_{j=0}^\infty\frac{(-1)^{j}(2 j-3)!!}{2^j\cdot j!}t^{2 j}\right)\left(\sum_{l=1}^\infty(2 l)(2 l-1)(2 l-3)^3\mathfrak C_{2 l-2}\frac{t^{2 l}}{(2 l)!}\right)\\
&=\frac{1}{6}\sum_{n=0}^\infty\sum_{l=0}^n\frac{(-1)^{n-l}(2 n-2 l-3)!!(2 l-1)(2 l-2)(2 l-3)}{2^{n-l}(n-l)!(2 l)!}\mathfrak C_{2 l}t^{2 n}\\
&\quad +\frac{1}{6}\sum_{n=1}^\infty\sum_{l=1}^n\frac{(-1)^{n-l}(2 n-2 l-3)!!(2 l)(2 l-1)(2 l-3)^3}{2^{n-l}(n-l)!(2 l)!}\mathfrak C_{2 l-2}t^{2 n}\,,
\end{align*} 
Comparing the coefficients with 
$$
L(t)^4=\sum_{n=0}^\infty(\mathfrak C_{0}+\mathfrak C_{0}+\mathfrak C_{0}+\mathfrak C_{0})^n\frac{t^{2 n}}{(2 n)!}\,, 
$$ 
we get the result.  
\end{proof}

As seen, convolution identities of the odd number of Cauchy numbers are simpler than those of the even number.    
Similarly, from 
\begin{align*}  
L(x)^5&=\frac{x^4(1+x^2)^2}{4!}L^{(4)}(x)+\frac{x^3(x^2-2)(1+x^2)}{12}L^{(3)}(x)\\
&\quad +\frac{x^2(x^4+10 x^2+12)}{4!}L''(x)-\frac{x(x^4+20 x^2+24)}{4!}L'(x)\\
&\quad+\frac{x^4+20 x^2+24}{4!}L(x)\,, 
\end{align*} 
we have for $n\ge 2$
\begin{align*}  
&(\mathfrak C_{0}+\mathfrak C_{0}+\mathfrak C_{0}+\mathfrak C_{0}+\mathfrak C_{0})^n\\
&=\binom{2 n-1}{4}\mathfrak C_{2 n}  
+\frac{4 n^2-16 n+17}{3}\binom{2 n}{2}\binom{2 n-3}{2}\mathfrak C_{2 n-2}\\
&\quad +\binom{2 n}{4}(2 n-5)^4\mathfrak C_{2 n-4}\,. 
\end{align*} 
From 
\begin{align*}  
L(x)^7&=\frac{x^6(1+x^2)^3}{6!}L^{(6)}(x)+\frac{x^5(3 x^2-2)(1+x^2)^2}{2\cdot 5!}L^{(5)}(x)\\
&\quad +\frac{x^4(4 x^4+x^2+6)(1+x^2)}{3!4!}L^{(4)}(x)\\
&\quad +\frac{x^3(2 x^2+3)(x^4-4 x^2-8)}{4!3!}L^{(3)}(x)\\
&\quad +\frac{x^2(x^6+91 x^4+420 x^2+360)}{6!}L''(x)\\
&\quad +\frac{x(x^6+182 x^4+840 x^2+720)}{6!}L'(x)\\
&\quad +\frac{x^6+182 x^4+840 x^2+720}{6!}L(x)\,,  
\end{align*}
we have for $n\ge 3$ 
\begin{align*}  
&(\underbrace{\mathfrak C_{0}+\cdots+\mathfrak C_{0}}_7)^n\\
&=\binom{2 n-1}{6}\mathfrak C_{2 n}+\frac{12 n^2-60 n+83}{15}\binom{2 n}{2}\binom{2 n-3}{4}\mathfrak C_{2 n-2}\\
&\quad +\frac{(4 n^2-24 n+39)(12 n^2-72 n+109)}{15}\binom{2 n}{4}\binom{2 n-5}{2}\mathfrak C_{2 n-4}\\
&\quad +\binom{2 n}{6}(2 n-7)^6\mathfrak C_{2 n-6}\,. 
\end{align*} 

Nevertheless, the higher-order cases seem to be more complicated when the number of Cauchy numbers increases.  
It is expected that for any integer $r\ge 1$ 
$$
(\underbrace{\mathfrak C_{0}+\cdots+\mathfrak C_{0}}_{2 r+1})^n
=\sum_{k=0}^r P_{r, 2k}(n)\binom{2 n}{2 k}\binom{2 n-2 k-1}{2 r-2 k}\mathfrak C_{2 n-2k}\,,
$$ 
where $P_{r,2 k}(n)$ are the polynomials of $n$ with degree $2 k$ ($0\le k\le r$). In particular, $P_{r,0}(n)=1$ and $P_{r, 2 r}=(2 n-2 r-1)^{2 r}$.


\begin{thebibliography}{99} 

\bibitem{BMS} 
P. L. Butzer, C. Markett and M. Schmidt, {\em  
Stirling numbers, central factorial numbers, and representations of the Riemann zeta function}, 
Results Math. {\bf 19} (1991), 257--274.   

\bibitem{BSSV}  
P. L. Butzer, M. Schimidt, E. L. Stark and L. Vogt, {\em  
Central factorial numbers; their main properties and some applications},  
Numer. Funct. Anal. Optimiz. {\bf 10} (1989), 419--488.  

\bibitem{GZ}  
Y. Gelineau and J. Zeng, {\em  
Combinatorial interpretations of the Jacobi-Stirling numbers},  
Electron. J. Combin. {\bf 17} (2010), Paper \#R70.

\bibitem{Kaneko}  
M. Kaneko, {\em 
Poly-Bernoulli numbers}, 
J. Th\'eor. Nombres Bordeaux {\bf 9} (1997), 199--206.   

\bibitem{KPT}  
M. Kaneko, M. Pallenwatta and H. Tsumura, {\em  
On poly-cosecant numbers},  
arXiv:1907.13441 (2019).  

\bibitem{Ko1}  
T. Komatsu, {\em  
Poly-Cauchy numbers},  
Kyushu J. Math. {\bf 67} (2013), 143--153. 

\bibitem{Ko2}  
T. Komatsu, {\em  
Poly-Cauchy numbers with a $q$ parameter},   
Ramanujan J. {\bf 31} (2013), 353--371.  

\bibitem{Ko5}  
T. Komatsu, {\em  
Convolution identities for Cauchy numbers},  
Acta Math. Hungar. {\bf 144} (2014), 76--91. 

\bibitem{Ko6}  
T. Komatsu, {\em  
Convolution identities for Cauchy numbers of the second kind},  
Kyushu J. Math. {\bf 69} (2015), 125--144.  

\bibitem{KP}  
T. Komatsu and C. Pita-Ruiz,  {\em  
Poly-Cauchy numbers with level $2$},  
Integral Transforms Spec. Func. (published online).  
https://doi.org/10.1080/10652469.2019.1710745 

\bibitem{Riordan} 
J. Riordan, {\em 
Combinatorial Identities}, 
John Wiley \& Sons, Inc., 1968. 

\bibitem{oeis}  
N. J. A. Sloane, {\em  
The On-Line Encyclopedia of Integer Sequences},  
available at oeis.org.  (2020).  
 
\end{thebibliography}
\end{document}